\providecommand{\U}[1]{\protect\rule{.1in}{.1in}}
\providecommand{\U}[1]{\protect\rule{.1in}{.1in}}
\providecommand{\U}[1]{\protect\rule{.1in}{.1in}}
\providecommand{\U}[1]{\protect\rule{.1in}{.1in}}
\newtheorem{theorem}{Theorem}[section]
\newtheorem{corollary}[theorem]{Corollary}
\newtheorem{proposition}[theorem]{Proposition}
\theoremstyle{definition}
\newtheorem{remark}[theorem]{Remark}
\newtheorem{definition}[theorem]{Definition}
\begin{document}
\title[Optimal estimates for summing multilinear operators]{Optimal
estimates for summing multilinear operators}
\author[G. Ara\'{u}jo]{Gustavo Ara\'{u}jo}
\address{Departamento de Matem\'{a}tica \\
Universidade Federal da Para\'{\i}ba \\
58.051-900 - Jo\~{a}o Pessoa, Brazil.}
\email{gdasaraujo@gmail.com}
\author[D. Pellegrino]{Daniel Pellegrino}
\address{Departamento de Matem\'{a}tica \\
Universidade Federal da Para\'{\i}ba \\
58.051-900 - Jo\~{a}o Pessoa, Brazil.}
\email{pellegrino@pq.cnpq.br and dmpellegrino@gmail.com}
\keywords{Absolutely summing operators, Banach spaces}
\thanks{\textit{Mathematics Subject Classification} [2010]: 47B10; 47L22;
46G25}

\begin{abstract}
We show that given a positive integer $m$, a real number $p\in\left[
2,\infty\right)$ and $1\leq s<p^{\ast}$ the set of non--multiple $\left(
r;s\right)$--summing $m$--linear forms on $\ell_{p}\times\cdots\times
\ell_{p}$ contains, except for the null vector, a closed subspace of maximal
dimension whenever $r<\frac{2ms}{s+2m-ms}$. This result is optimal since for
$r\geq\frac{2ms}{s+2m-ms}$ all $m$--linear forms on $\ell_{p}\times
\cdots\times\ell_{p}$ are multiple $\left( r;s\right)$--summing. In
particular, among other results, we generalize a result related to cotype
(from 2010) due to Botelho \textit{et al.}
\end{abstract}

\maketitle

\section{Introduction}

The family of inequalities known as Bohnenblust--Hille, Littlewood's $4/3$
and Hardy--Littlewood (see \cite{bh,hardy,LLL}) dates back to the 30s and,
after a long period of dormancy, have been rediscovered in the recent years
with interesting applications in different fields. In the modern
terminology, these inequalities can be seen as coincidence results in the
theory of multiple summing operators. The main goal of this note is to
investigate in details how the new advances in the study of the
aforementioned inequalities can be explored in the context of multiple
summing operators.

Let $E,E_{1},...,E_{m}$ and $F$ denote Banach spaces over $\mathbb{K}=%
\mathbb{R}$ or $\mathbb{C}$ and let $B_{E^{\ast}}$ denote the closed unit
ball of the topological dual of $E$. If $1\leq q\leq\infty$, the symbol $%
q^{\ast}$ represents the conjugate of $q$. It will be convenient to adopt
that $\frac{c}{\infty}=0$ for any $c>0$. For $s>0$, by $\ell_{s}(E)$ we mean
the space of absolutely $s$--summable sequences in $E$; for $s\geq1$ we
represent by $\ell_{s}^{w}(E)$ the linear space of the sequences $\left(
x_{j}\right) _{j=1}^{\infty}$ in $E$ such that $\left( \varphi\left(
x_{j}\right) \right) _{j=1}^{\infty}\in\ell_{s}$ for every continuous linear
functional $\varphi:E\rightarrow\mathbb{K}$. The function
\begin{equation*}
\left\Vert \left( x_{j}\right) _{j=1}^{\infty}\right\Vert
_{w,s}=\sup_{\varphi\in B_{E^{\ast}}}\left\Vert \left( \varphi\left(
x_{j}\right) \right) _{j=1}^{\infty}\right\Vert _{s}
\end{equation*}
defines a norm on $\ell_{s}^{w}(E)$. The space of all continuous $m$-linear
operators $T:E_{1}\times\cdots\times E_{m}\rightarrow F$, with the $\sup$
norm, is denoted by $\mathcal{L}\left( E_{1},...,E_{m};F\right) $.

The multilinear theory of absolutely summing operators was initiated by
Pietsch \cite{pi1} and nowadays is a very fruitful topic of investigation
(for the linear theory we refer, for instance, to \cite{Di, piet}). The
following concept is a natural extension of the notion of absolutely summing
linear operators to the multilinear setting (see \cite{matos, per}, see also
\cite{campo, popa, popa2, popa5, serrano, rueda} for related and recent approaches):

\begin{definition}
Let $1\leq s\leq r<\infty$. A multilinear operator $T\in\mathcal{L}\left(
E_{1},...,E_{m};F\right) $ is multiple $\left( r;s\right) $--summing if
there exists a $C>0$ such that
\begin{equation*}
\left( \sum_{j_{1},...,j_{m}=1}^{\infty}\left\Vert T(
x_{j_{1}}^{(1)},...,x_{j_{m}}^{(m)}) \right\Vert ^{r}\right) ^{\frac{1}{r}%
}\leq C\prod_{k=1}^{m}\left\Vert ( x_{j}^{(k)}) _{j=1}^{\infty}\right\Vert
_{w,s}
\end{equation*}
for all $( x_{j}^{(k)}) _{j=1}^{\infty}\in\ell_{s}^{w}\left( E_{k}\right) $,
$k\in\{1,...,m\}$. We represent the class of all multiple $\left( r;s\right)
$--summing operators from $E_{1},....,E_{m}$ to $F$ by $\Pi_{\mathrm{mult}%
\left( r;s\right) }\left( E_{1},...,E_{m};F\right) $ and $\pi _{\mathrm{mult}%
\left( r;s\right) }\left( T\right) $ denotes the infimum over all $C$ as
above.
\end{definition}

The classical Bohnenblust--Hille inequality \cite{bh}, in the modern
terminology, can be stated in terms of multiple summing operators, as
remarked in \cite{per} (see also \cite{REMC2010}):

\begin{theorem}[Bohnenblust--Hille]
Every continuous $m$--linear form $T\in\mathcal{L}\left( E_{1},...,E_{m};%
\mathbb{K}\right) $ is multiple $\left( \frac {2m}{m+1};1\right) $--summing
for all Banach spaces $E_{1},...,E_{m}$ and $\frac{2m}{m+1}$ is optimal.
\end{theorem}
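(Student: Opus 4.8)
The plan is to split the statement into its two halves: the summability inequality with exponent $\frac{2m}{m+1}$, and the sharpness of that exponent. For the inequality, I would first reduce the assertion for arbitrary Banach spaces $E_{1},\dots,E_{m}$ to a scalar coefficient problem on $c_{0}$. Given sequences $(x_{j}^{(k)})_{j}\in\ell_{1}^{w}(E_{k})$ with $\|(x_{j}^{(k)})_{j}\|_{w,1}\le 1$, the linear maps $u_{k}\colon c_{0}\to E_{k}$, $e_{j}\mapsto x_{j}^{(k)}$, are contractions (their norm equals the weak-$1$ norm of the sequence), so $\widetilde{T}:=T\circ(u_{1},\dots,u_{m})$ is an $m$-linear form on $c_{0}\times\cdots\times c_{0}$ with $\|\widetilde{T}\|\le\|T\|$ and $\widetilde{T}(e_{j_{1}},\dots,e_{j_{m}})=T(x_{j_{1}}^{(1)},\dots,x_{j_{m}}^{(m)})$. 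Hence it suffices to prove, uniformly over finite sections,
\begin{equation*}
\left(\sum_{j_{1},\dots,j_{m}}\bigl|\widetilde{T}(e_{j_{1}},\dots,e_{j_{m}})\bigr|^{\frac{2m}{m+1}}\right)^{\frac{m+1}{2m}}\le C_{m}\,\|\widetilde{T}\|,
\end{equation*}
and then let the truncation tend to infinity.

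For this scalar estimate I would combine Khinchin's inequality with a mixed-norm (Blei-type) interpolation. Writing $a_{\mathbf{j}}=\widetilde{T}(e_{j_{1}},\dots,e_{j_{m}})$, the first step bounds, for each fixed position $k$, the mixed norm obtained by taking $\ell_{2}$ in all variables except $k$ and then $\ell_{1}$ in the $k$-th. Fixing $k=1$ and applying Khinchin in the variables $2,\dots,m$ replaces $\bigl(\sum_{j_{2},\dots,j_{m}}|a_{\mathbf{j}}|^{2}\bigr)^{1/2}$ by an average over Rademacher signs of $|\widetilde{T}(e_{j_{1}},y_{2},\dots,y_{m})|$, where each $y_{k}=\sum_{j}r_{j}(t_{k})e_{j}$ has $\|y_{k}\|_{c_{0}}=1$; summing over $j_{1}$ with the optimal signs $\epsilon_{j_{1}}$ then gives $\sum_{j_{1}}|\widetilde{T}(e_{j_{1}},y_{2},\dots,y_{m})|=\widetilde{T}\bigl(\sum_{j_{1}}\epsilon_{j_{1}}e_{j_{1}},y_{2},\dots,y_{m}\bigr)\le\|\widetilde{T}\|$. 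This yields each mixed norm $\le A^{-(m-1)}\|\widetilde{T}\|$, with $A$ the Khinchin constant. The second step is Blei's inequality, controlling the full $\ell_{2m/(m+1)}$ norm by the geometric mean of the $m$ mixed norms; multiplying the estimates produces $C_{m}=A^{-(m-1)}$.

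For optimality I would argue by contradiction using the Kahane--Salem--Zygmund inequality. It provides, for each $n$, unimodular coefficients $a_{\mathbf{j}}\in\{-1,1\}$ such that the $m$-linear form $T_{n}(x^{(1)},\dots,x^{(m)})=\sum_{\mathbf{j}}a_{\mathbf{j}}x_{j_{1}}^{(1)}\cdots x_{j_{m}}^{(m)}$ on $(\ell_{\infty}^{n})^{m}$ satisfies $\|T_{n}\|\le\kappa_{m}\,n^{(m+1)/2}$ (logarithmic refinements being irrelevant to the exponent). Since all $n^{m}$ coefficients have modulus one and $\|(e_{j})_{j=1}^{n}\|_{w,1}=1$ in $\ell_{\infty}^{n}$, a uniform $(r;1)$-summing bound $\pi_{\mathrm{mult}(r;1)}(\cdot)\le C\|\cdot\|$ would force $n^{m/r}\le C\,\|T_{n}\|\le C\kappa_{m}\,n^{(m+1)/2}$ for every $n$, hence $\frac{m}{r}\le\frac{m+1}{2}$, i.e. $r\ge\frac{2m}{m+1}$. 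Thus no exponent below $\frac{2m}{m+1}$ is admissible.

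The two non-elementary inputs are where the real work lies. On the positive side, Blei's mixed-norm inequality is the analytic heart of the argument: the factorization reduction and the Khinchin step are routine once it is available, so I would expect to spend most of the effort justifying (or citing) it and tracking the resulting constant. On the optimality side, the genuinely deep ingredient is the Kahane--Salem--Zygmund estimate guaranteeing forms whose sup norm is as small as $n^{(m+1)/2}$ despite unimodular coefficients; everything else in the optimality argument is bookkeeping.
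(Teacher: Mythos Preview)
The paper does not supply a proof of this theorem: it is stated in the Introduction as the classical Bohnenblust--Hille inequality, with references to \cite{bh}, \cite{per} and \cite{REMC2010}, and is used as background for the paper's new results. So there is no ``paper's own proof'' to compare against.

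Your argument is correct and is essentially the standard modern proof. The reduction from arbitrary $E_{k}$ to $c_{0}$ via the operators $u_{k}$ is exactly the factorization the paper later invokes (from \cite[Proposition~2.2]{Di}) in the proof of Theorem~\ref{000}(i); the Khinchin-plus-Blei route to the scalar inequality on $c_{0}$ is the usual one. For optimality, your use of the Kahane--Salem--Zygmund forms on $(\ell_{\infty}^{n})^{m}$ and the comparison $n^{m/r}\le C\kappa_{m}n^{(m+1)/2}$ is the cleanest way to see that $r\ge\frac{2m}{m+1}$; it is also close in spirit to the paper's own machinery, since the proof of Theorem~\ref{xx} applies KSZ on $\ell_{p}^{n}$ (finite $p$) with weighted test sequences $e_{j}/j^{\beta}$ to reach a stronger spaceability conclusion, whereas you use the unweighted $e_{j}$ at $p=\infty$ to obtain just the exponent bound. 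One tiny cosmetic point: in the Khinchin step the signs $\epsilon_{j_{1}}$ must be chosen as functions of $(t_{2},\dots,t_{m})$ inside the integral, and over $\mathbb{C}$ they are unimodular rather than $\pm1$; both are harmless and do not affect the estimate.
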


The paper is organized as follows. In Section 2, we prove our main result,
which, in particular, shows that if $1<s<p^{\ast }$, the set $\mathcal{L}%
\left( ^{m}\ell _{p};\mathbb{K}\right) \smallsetminus \Pi _{\mathrm{mult}%
\left( \frac{2m}{m+1};s\right) }\left( ^{m}\ell _{p};\mathbb{K}\right) $
contains, except for the null vector, a closed infinite--dimensional Banach
space with the same dimension of $\mathcal{L}(^{m}\ell _{p};\mathbb{K})$. In
Section 3, we show some consequences of the result of the previous section.
For instance, as a particular case of our main result, we observe a new
optimality component of the Bohnenblust--Hille inequality: the term $1$ from
the pair $\left( \frac{2m}{m+1};1\right) $ is also optimal. In Sections 4
and 5, we investigate the optimality of coincidence results for multiple
summing operators in $c_{0}$ and in the framework of absolutely summing
multilinear operators, respectively.

\section{Maximal subspaces and multiple summability}

For a given Banach space $E$, a subset $A\subset E$ is \textit{spaceable} if
$A\cup\{0\}$ contains a closed infinite--dimensional subspace $V$ of $E$
(for details on spaceability and the related notion of lineability we refer
to \cite{bernal} and the references therein). When $\dim V=\dim E$, $A$ is
called \textit{maximal spaceable}. From now on $\mathfrak{c}$ denotes the
cardinality of the continuum.

\begin{theorem}
\label{xx} Let $m\geq1,$ $p\in\left[ 2,\infty\right) .$ If $1\leq s<p^{\ast}
$ and
\begin{equation*}
r<\frac{2ms}{s+2m-ms}
\end{equation*}
then
\begin{equation*}
\mathcal{L}\left( ^{m}\ell_{p};\mathbb{K}\right) \smallsetminus \Pi_{\mathrm{%
mult}\left( r;s\right) } \left( ^{m}\ell_{p};\mathbb{K}\right)
\end{equation*}
is maximal spaceable.
\end{theorem}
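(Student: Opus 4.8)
The plan is to first reduce the assertion of maximal spaceability to ordinary spaceability, and then to build the required closed subspace by a disjoint--block construction driven by the Kahane--Salem--Zygmund inequality together with the exponent computation that makes $\frac{2ms}{s+2m-ms}$ the critical value. Since $\ell_p$ is separable, a continuous $m$--linear form is determined by its values on a countable dense subset, so $\mathcal{L}(^m\ell_p;\mathbb{K})$ has cardinality $\mathfrak{c}$ and hence Hamel dimension at most $\mathfrak{c}$; being an infinite--dimensional Banach space, a Baire category argument gives Hamel dimension exactly $\mathfrak{c}$. Consequently every infinite--dimensional closed subspace $V$ already satisfies $\dim V=\mathfrak{c}=\dim\mathcal{L}(^m\ell_p;\mathbb{K})$, so it suffices to produce a single closed infinite--dimensional subspace $V$ all of whose nonzero elements fail to be multiple $(r;s)$--summing.

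Next I would manufacture the building block. For each $n$ the Kahane--Salem--Zygmund inequality provides unimodular scalars $(\epsilon_{j_{1}\cdots j_{m}})$ such that the $m$--linear form $A_{n}$ on $\ell_{p}^{n}\times\cdots\times\ell_{p}^{n}$ with these coefficients satisfies $\Vert A_{n}\Vert\lesssim n^{\frac{m+1}{2}-\frac{m}{p}}$, the hypothesis $p\geq2$ being used here (up to an irrelevant logarithmic factor). Testing the defining inequality on the canonical unit vectors, and using that for $s<p^{\ast}$ one has $\Vert(e_{j})_{j=1}^{n}\Vert_{w,s}=n^{\frac{1}{s}-\frac{1}{p^{\ast}}}$, the quotient of the two sides is bounded below by a constant multiple of $n^{\frac{m}{r}+\frac{m-1}{2}-\frac{m}{s}}$. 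A direct computation shows that this exponent is strictly positive exactly when $r<\frac{2ms}{s+2m-ms}$, so under our hypotheses the multiple $(r;s)$--summing constants of the forms $A_{n}$ grow polynomially; this is precisely the quantitative optimality of the threshold.

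Then I would assemble these blocks. Fix a partition of $\mathbb{N}$ into countably many infinite super--blocks $N_{1},N_{2},\dots$ and, inside each $N_{k}$, a further partition into consecutive sub--blocks of sizes $1,2,3,\dots$. On the $n$--th sub--block of $N_{k}$ I place a rescaled copy $c_{n}A_{n}$, choosing the weights $c_{n}$ to decay slowly enough that $c_{n}\,n^{\frac{m}{r}+\frac{m-1}{2}-\frac{m}{s}}\to\infty$ yet fast enough that the form $\Phi_{k}$ supported on the coordinates of $N_{k}$ is bounded on $\ell_{p}$; after rescaling we may take $\Vert\Phi_{k}\Vert=1$. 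Since the sub--blocks are disjoint, testing the summing inequality on the unit vectors of each sub--block isolates the term $c_{n}A_{n}$ and forces $\Phi_{k}\notin\Pi_{\mathrm{mult}(r;s)}(^m\ell_p;\mathbb{K})$. As the $\Phi_{k}$ have pairwise disjoint coordinate supports, the map $(a_{k})\mapsto\sum_{k}a_{k}\Phi_{k}$ is a bounded linear injection of a suitable infinite--dimensional sequence space into $\mathcal{L}(^m\ell_p;\mathbb{K})$, with closed range $V$. Finally, if $T=\sum_{k}a_{k}\Phi_{k}\in V$ is nonzero, pick $k_{0}$ with $a_{k_{0}}\neq0$; restricting the summing test to inputs supported in $N_{k_{0}}$ reduces $T$ to $a_{k_{0}}\Phi_{k_{0}}$, which is not multiple $(r;s)$--summing, whence neither is $T$. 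Thus $V\setminus\{0\}$ lies in the stated complement.

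I expect the main obstacle to be the simultaneous bookkeeping in the block assembly. Controlling the operator norm of the \emph{infinite} combinations $\sum_{k}a_{k}\Phi_{k}$ is delicate because disjointness in $\ell_{p}$ with $p>1$ aggregates the blocks in an $\ell_{q}$--fashion rather than as a supremum, so the admissible sequence space and the decay rates $c_{n}$ must be chosen compatibly; keeping each $\Phi_{k}$ bounded while preserving the polynomial blow--up of its summing constants is the crux. The strict inequality $r<\frac{2ms}{s+2m-ms}$ is what saves the argument, since it leaves a genuine polynomial margin in the exponent that comfortably absorbs both the logarithmic loss in Kahane--Salem--Zygmund and the mild decay imposed on the $c_{n}$.
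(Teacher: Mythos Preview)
Your approach is sound and reaches the same threshold, but it differs from the paper's in the spaceability step. After showing via Kahane--Salem--Zygmund that $\pi_{\mathrm{mult}(r;s)}(A_n)/\Vert A_n\Vert\to\infty$ (the paper tests on the weighted sequence $(e_j/j^{\beta})$ with $\beta=\frac{p+s-ps}{ps}$ so that the weak $s$--norm is only logarithmic, whereas you test on the unweighted $(e_j)$; both give the critical exponent $\frac{m}{r}+\frac{m-1}{2}-\frac{m}{s}$), the paper does \emph{not} assemble a subspace by hand. It simply invokes the Open Mapping Theorem to deduce that $\Pi_{\mathrm{mult}(r;s)}(^m\ell_p;\mathbb{K})$ is a proper non--closed subspace of $\mathcal{L}(^m\ell_p;\mathbb{K})$, and then applies an abstract operator--range criterion (Drewnowski; Kitson--Timoney) which guarantees that the complement of such a subspace is spaceable. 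The reduction of ``maximal'' to ordinary spaceability via $\dim\mathcal{L}(^m\ell_p;\mathbb{K})=\mathfrak{c}$ is identical in both arguments.

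Your explicit block construction is a legitimate and more self--contained alternative. The point that makes the bookkeeping clean is that coordinate restriction to a subset $J\subset\mathbb{N}$ is a norm--one projection on $\ell_p$, so composing with these restrictions gives uniformly bounded partial--sum projections on $\mathrm{span}\{\Phi_k\}$; hence $(\Phi_k)$ is a monotone basic sequence and its closed linear span automatically has the properties you need, without matching upper and lower norm estimates on $\sum_k a_k\Phi_k$. One small adjustment: with sub--blocks of \emph{linear} sizes $1,2,3,\dots$, the simultaneous constraints on $c_n$ (summability of $(c_n\Vert A_n\Vert)^{p/(p-m)}$ when $m<p$, versus $c_n n^{m/r-m/s+m/p^{\ast}}\to\infty$) can conflict when the margin $\delta=\frac{m}{r}+\frac{m-1}{2}-\frac{m}{s}$ is small. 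Taking geometric sizes $n=2^{j}$ and $c_j=j^{-2}/\Vert A_{2^{j}}\Vert$ resolves this uniformly, since then $\sum_j j^{-2p/(p-m)}<\infty$ while the summing test on the $j$--th block gives $2^{j\delta}/j^{2}\to\infty$. The paper's route is shorter; yours is constructive and avoids the external spaceability machinery.
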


\begin{proof}
We consider the case of complex scalars. The case of real scalars is
obtained from the complex case via a standard complexification argument (see
\cite{REMC2010}). An extended version of the Kahane--Salem--Zygmund
inequality (see \cite[Lemma 6.1]{alb} and also \cite{bay} for several
related results) asserts that if $m,n\geq1$ and $p\in\lbrack2,\infty]$, then
there exists a $m$--linear map $A_{n}:\ell_{p}^{n}\times\cdots\times
\ell_{p}^{n}\rightarrow\mathbb{K}$ of the form
\begin{equation}
A_{n}(z^{(1)},\dots,z^{(m)})=\sum_{j_{1},\dots,j_{m}=1}^{n}\pm
z_{j_{1}}^{(1)}\cdots z_{j_{m}}^{(m)}  \label{1}
\end{equation}
such that
\begin{equation*}
\Vert A_{n}\Vert\leq C_{m}n^{\frac{m+1}{2}-\frac{m}{p}}
\end{equation*}
for some constant $C_{m}>0$.

Let
\begin{equation*}
\beta:=\frac{p+s-ps}{ps}.
\end{equation*}
Observe that $s<p^{\ast}$ implies $\beta>0$. We have
\begin{equation*}
\left( \sum\limits_{j_{1},...,j_{m}=1}^{n}\left\vert A_{n}\left( \frac{%
e_{j_{1}}}{j_{1}^{\beta}},...,\frac{e_{j_{m}}}{j_{m}^{\beta}}\right)
\right\vert ^{r}\right) ^{\frac{1}{r}}\leq\pi_{\mathrm{mult}\left(
r;s\right) }\left( A_{n}\right) \left\Vert \left( \frac{e_{j}}{j^{\beta} }%
\right) _{j=1}^{n}\right\Vert _{w,s}^{m}
\end{equation*}
i.e.,
\begin{equation*}
\left( \sum\limits_{j_{1},...,j_{m}=1}^{n}\left\vert \frac{1}{j_{1}^{\beta
}...j_{m}^{\beta}}\right\vert ^{r}\right) ^{\frac{1}{r}}\leq\pi _{\mathrm{%
mult}\left( r;s\right) }\left( A_{n}\right) \left\Vert \left( \frac{e_{j}}{%
j^{\beta}}\right) _{j=1}^{n}\right\Vert _{w,s}^{m}.
\end{equation*}
But, for $n\geq2,$ since $\frac{1}{\frac{1}{\beta s}}+\frac{1}{\frac{%
p^{\ast} }{s}}=1,$ we obtain
\begin{align*}
\left\Vert \left( \frac{e_{j}}{j^{\beta}}\right) _{j=1}^{n}\right\Vert
_{w,s} & =\sup_{\varphi\in B_{(\ell_{p}^{n})^{\ast}}}\left( \sum
\limits_{j=1}^{n}\left\vert \varphi\left( \frac{e_{j}}{j^{\beta}}\right)
\right\vert ^{s}\right) ^{\frac{1}{s}} \\
& =\sup_{\varphi\in B_{\ell_{p^{\ast}}^{n}}}\left( \sum\limits_{j=1}
^{n}\left\vert \varphi_{j}\right\vert ^{s}\frac{1}{j^{\beta s}}\right) ^{%
\frac{1}{s}} \\
& \leq\left( \left( \sum\limits_{j=1}^{n}\left\vert \varphi_{j}\right\vert
^{p^{\ast}}\right) ^{\frac{s}{p^{\ast}}}\left( \sum\limits_{j=1}^{n}\frac {1%
}{j}\right) ^{\beta s}\right) ^{\frac{1}{s}} \\
& <\left( 1+\log n\right) ^{\beta}.
\end{align*}
Hence%
\begin{equation*}
\left( \sum\limits_{j=1}^{n}\frac{1}{j^{r\beta}}\right) ^{\frac{m}{r}} <\pi_{%
\mathrm{mult}\left( r;s\right) }\left( A_{n}\right) \left( 1+\log n\right)
^{m\beta}
\end{equation*}
and consequently
\begin{equation*}
\left( n^{1-r\beta}\right) ^{\frac{m}{r}}<\pi_{\mathrm{mult}\left(
r;s\right) }\left( A_{n}\right) \left( 1+\log n\right) ^{m\beta}.
\end{equation*}

Since $\left\Vert A_{n}\right\Vert \leq C_{m}n^{\frac{m+1}{2}-\frac{m}{p}}$
we have
\begin{equation*}
\frac{\pi_{\mathrm{mult}\left( r;s\right) }\left( A_{n}\right) }{\left\Vert
A_{n}\right\Vert }>\frac{n^{\frac{m}{r}-\left( \frac{p+s-ps}{ps}\right) m}}{%
\left( 1+\log n\right) ^{m\beta}C_{m}n^{\frac{m+1}{2}-\frac{m}{p}}}.
\end{equation*}
By making $n\rightarrow\infty$ and using that $r<\frac{2ms}{s+2m-ms}$ we get
\begin{equation*}
\lim_{n\rightarrow\infty}\frac{\pi_{\mathrm{mult}\left( r;s\right) }\left(
A_{n}\right) }{\left\Vert A_{n}\right\Vert }=\infty
\end{equation*}
and from the Open Mapping Theorem we conclude that $\Pi_{\mathrm{mult}\left(
r;s\right) }\left( ^{m}\ell_{p};\mathbb{K}\right) $ is not closed in $%
\mathcal{L}\left( ^{m}\ell_{p};\mathbb{K}\right) .$ From \cite[Theorem 5.6
and its reformulation]{drew} (see also \cite{KT}) we conclude that $\mathcal{%
L}\left( ^{m}\ell_{p};\mathbb{K}\right) \smallsetminus \Pi_{\mathrm{mult}%
\left( r;s\right) }\left( ^{m}\ell_{p};\mathbb{K}\right) $ is spaceable.

It remains to prove the maximal spaceability. Since $\mathcal{L}(^{m}\ell
_{p};\mathbb{K})$ is a Banach space, we have
\begin{equation*}
\dim(\mathcal{L}(^{m}\ell _{p};\mathbb{K}))\geq\mathfrak{c}.
\end{equation*}
Let $\gamma$ be a Hamel basis of $\mathcal{L}(^{m}\ell_{p};\mathbb{K})$ and
\begin{equation*}
\begin{array}{ccccc}
g & : & \gamma & \rightarrow & \mathbb{K}^{c_{00}(\mathbb{Q})\times
\cdots\times c_{00}(\mathbb{Q})} \\
&  & T & \mapsto & T|_{c_{00}(\mathbb{Q})\times\cdots\times c_{00}(\mathbb{Q}%
)},%
\end{array}%
\end{equation*}
where $\mathbb{K}^{c_{00}(\mathbb{Q})\times\cdots\times c_{00}(\mathbb{Q})}$
is the set of all functions from $c_{00}(\mathbb{Q})\times\cdots\times
c_{00}(\mathbb{Q})$ to $\mathbb{K}$ (by $c_{00}(\mathbb{Q})$ we mean the
eventually null sequences with rational entries). From the density of $%
c_{00}(\mathbb{Q})$ in $\ell_{p}$ we conclude that $g$ is injective and so
\begin{equation*}
\dim\left( \mathcal{L}(^{m}\ell_{p};\mathbb{K})\right) =\mathrm{card}\left(
\gamma\right) \leq\mathrm{card}\left( \mathbb{K}^{c_{00}(\mathbb{Q}%
)\times\cdots\times c_{00}(\mathbb{Q})}\right) =\mathfrak{c}.
\end{equation*}

Therefore, if
\begin{equation*}
V\subseteq(\mathcal{L}(^{m}\ell_{p};\mathbb{K})\smallsetminus\Pi _{\mathrm{%
mult}(r;s)}(^{m}\ell_{p};\mathbb{K}))\cup\{0\}
\end{equation*}
is a closed infinite--dimensional subspace of $\mathcal{L}(^{m}\ell _{p};%
\mathbb{K})$, we have $\dim(V)\leq\mathfrak{c}$. Since $V$ is a Banach
space, we also have $\dim(V)\geq\mathfrak{c}$. Thus, by the
Cantor-Bernstein-Schroeder Theorem, it follows that $\dim(V)=\mathfrak{c}$
and the proof is done.
\end{proof}

\begin{remark}
Note that it was not necessary to suppose the Continuum Hypothesis. In fact,
for instance the proof given in \cite[Remark 2.5]{studia} of the fact that
the dimension of every infinite--dimensional Banach space is, at least, $%
\mathfrak{c}$ does not depends on the Continuum Hypothesis.
\end{remark}

\section{Some consequences}

The following result is a simple consequence of Theorem \ref{xx}.

\begin{corollary}
\label{corol} Let $m\geq2$ and $r\in\left[ \frac{2m}{m+1},2\right] $. Then
\begin{equation*}
\sup\left\{ s:\mathcal{L}(^{m}\ell_{p};\mathbb{K})=\Pi_{\mathrm{mult}\left(
r;s\right) }(^{m}\ell_{p};\mathbb{K})\right\} \leq\frac{2mr}{mr+2m-r}
\end{equation*}
for all $2\leq p<\frac{2mr}{r+mr-2m}$.
\end{corollary}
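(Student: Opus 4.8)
The plan is to read Corollary \ref{corol} as an inversion of the non-coincidence range furnished by Theorem \ref{xx}, combined with the elementary monotonicity of the summing classes in the inner exponent. First I would set $s_0:=\frac{2mr}{mr+2m-r}$, the proposed upper bound, and carry out the purely algebraic step of solving the threshold $r=\frac{2ms}{s+2m-ms}$ for $s$. A short rearrangement gives $s=s_0$, and---keeping track of the sign of the denominator $s+2m-ms$, which stays positive for the relevant $s$ since $s<p^{\ast}\le 2<\frac{2m}{m-1}$---one checks that for such $s$ the hypothesis $r<\frac{2ms}{s+2m-ms}$ of Theorem \ref{xx} is equivalent to $s>s_0$.

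Next I would translate the two standing hypotheses into statements about $s_0$. The condition $r\ge\frac{2m}{m+1}$ rearranges to $r(m+1)\ge 2m$, i.e. $2mr\ge mr+2m-r$, which is exactly $s_0\ge 1$; and computing the conjugate of $P:=\frac{2mr}{r+mr-2m}$ yields $P^{\ast}=\frac{2mr}{mr+2m-r}=s_0$, so that the hypothesis $p<P$ is equivalent (conjugation being decreasing on $(1,\infty)$) to $s_0<p^{\ast}$. Thus the standing assumptions guarantee precisely that $1\le s_0<p^{\ast}$, and in particular that the interval $(s_0,p^{\ast})$ is nonempty.

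The remaining ingredient is the monotonicity fact that $\Pi_{\mathrm{mult}(r;s)}(^{m}\ell_{p};\mathbb{K})\subseteq\Pi_{\mathrm{mult}(r;s')}(^{m}\ell_{p};\mathbb{K})$ whenever $s'\le s$, which follows at once from $\Vert\cdot\Vert_{w,s}\le\Vert\cdot\Vert_{w,s'}$ inserted into the defining inequality. Consequently the coincidence set $S:=\{s:\mathcal{L}(^{m}\ell_{p};\mathbb{K})=\Pi_{\mathrm{mult}(r;s)}(^{m}\ell_{p};\mathbb{K})\}$ is down-closed. By the first two steps, for every $s\in(s_0,p^{\ast})$ we have $1\le s<p^{\ast}$ together with $r<\frac{2ms}{s+2m-ms}$, so Theorem \ref{xx} makes the corresponding set difference maximal spaceable---in particular nonempty---whence $s\notin S$. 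To conclude, I would suppose $\sup S>s_0$, pick $s$ with $s_0<s<\min(\sup S,p^{\ast})$, and observe that down-closedness of $S$ forces $s\in S$, contradicting the previous sentence; hence $\sup S\le s_0=\frac{2mr}{mr+2m-r}$.

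I expect the only delicate point to be the bookkeeping: getting the direction of the equivalence $r<\frac{2ms}{s+2m-ms}\Leftrightarrow s>s_0$ right (which hinges on the positivity of $s+2m-ms$) and verifying the conjugate identity $P^{\ast}=s_0$. Once these algebraic identifications are in place, the spaceability input of Theorem \ref{xx} together with the down-closedness of $S$ closes the argument with no further analysis, which is why the corollary is genuinely a \emph{simple} consequence of the main theorem.
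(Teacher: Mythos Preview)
Your proposal is correct and follows essentially the same route as the paper: both invert the threshold $r<\frac{2ms}{s+2m-ms}$ into $s>s_0:=\frac{2mr}{mr+2m-r}$, verify that the hypotheses on $r$ and $p$ amount to $1\le s_0<p^{\ast}$, and then apply Theorem~\ref{xx} on the interval $(s_0,p^{\ast})$ to preclude coincidence there. The one place where you are more explicit than the paper is the down-closedness of the coincidence set $S$ via the monotonicity $\Pi_{\mathrm{mult}(r;s)}\subseteq\Pi_{\mathrm{mult}(r;s')}$ for $s'\le s$; the paper simply passes from ``non-coincidence on $(s_0,p^{\ast})$'' to ``$\sup S\le s_0$'' without comment, which tacitly uses exactly this monotonicity to rule out potential $s\in S$ with $s\ge p^{\ast}$.
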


\begin{proof}
Since $\frac{2m}{m+1}\leq r\leq2<2m$, it follows that $1\leq\frac {2mr}{%
mr+2m-r}$ and $2<\frac{2mr}{r+mr-2m}$. Note that
\begin{equation*}
s>\frac{2mr}{mr+2m-r}
\end{equation*}
implies
\begin{equation*}
r<\frac{2ms}{s+2m-ms}.
\end{equation*}
Therefore, for $2\leq p<\frac{2mr}{r+mr-2m}$, from Theorem \ref{xx} we know
that
\begin{equation*}
\mathcal{L}\left( ^{m}\ell_{p};\mathbb{K}\right) \smallsetminus \Pi_{\mathrm{%
mult}\left( r;s\right) }\left( ^{m}\ell_{p};\mathbb{K}\right)
\end{equation*}
is spaceable for all $\frac{2mr}{mr+2m-r}<s<p^{\ast}$ (note that $p<\frac {%
2mr}{r+mr-2m}$ implies $p^{\ast}>\frac{2mr}{mr+2m-r}$). In particular, for $%
2\leq p<\frac{2mr}{r+mr-2m}$,
\begin{equation*}
\sup\left\{ s:\mathcal{L}(^{m}\ell_{p};\mathbb{K})= \Pi_{\mathrm{mult}\left(
r;s\right) }(^{m}\ell_{p};\mathbb{K})\right\} \leq\frac{2mr}{mr+2m-r}.
\end{equation*}
\end{proof}

This corollary together with our main result (Theorem \ref{xx}) ensures
that, for $r\in \left[ \frac{2m}{m+1},2\right] $ and $2\leq p<\frac{2mr}{%
r+mr-2m}$,
\begin{equation*}
\sup \left\{ s:\mathcal{L}(^{m}\ell _{p};\mathbb{K})=\Pi _{\mathrm{mult}%
\left( r;s\right) }(^{m}\ell _{p};\mathbb{K})\right\} =\frac{2mr}{mr+2m-r}.
\end{equation*}%
When $p=2$ the expression above recovers the optimality of \cite[Theorem 5.14%
]{REMC2010} in the case of $m$--linear forms on $\ell _{2}\times \cdots
\times \ell _{2}$.

We recall that a Banach space $X$ has cotype $2\leq q<\infty $ if there is a
constant $C>0$ such that
\begin{equation}
\left( \sum_{k=1}^{n}\Vert x_{k}\Vert ^{q}\right) ^{\frac{1}{q}}\leq C\left(
\int_{[0,1]}\left\Vert \sum_{k=1}^{n}r_{k}(t)x_{k}\right\Vert ^{2}dt\right)
^{\frac{1}{2}}  \label{qqq}
\end{equation}%
for all positive integers $n$ and all $x_{1},\dots ,x_{n}\in X$, where each $%
r_{k}$ denotes the $k$-th Rademacher function. The smallest of all these
constants is denoted by $C_{q}(X)$ and it is called the cotype $q$ constant
of $X$. For details and classical results we refer to \cite{Di, pisier2}

In 2010 G. Botelho, C. Michels and D. Pellegrino \cite{REMC2010} have shown
that for $m\geq1$ and Banach spaces $E_{1},...,E_{m}$ of cotype $2$,
\begin{equation*}
\mathcal{L}\left( E_{1},...,E_{m};\mathbb{K}\right) = \Pi_{\mathrm{mult}%
\left( 2;\frac{2m}{2m-1}\right) }\left( E_{1},...,E_{m};\mathbb{K}\right)
\end{equation*}
and for Banach spaces of cotype $k>2$,
\begin{equation*}
\mathcal{L}\left( E_{1},...,E_{m};\mathbb{K}\right) =\Pi_{\mathrm{mult}%
\left( 2;\frac{km}{km-1}-\epsilon\right) }\left( E_{1},...,E_{m};\mathbb{K}%
\right)
\end{equation*}
for all sufficiently small $\epsilon>0$.

We now remark that it is not necessary to make any assumptions on the Banach
spaces $E_{1},...,E_{m}$ and $\frac{2m}{2m-1}$ holds in all cases. Given $%
k>2 $, in \cite[page 194]{laa} it is said that it is not known if $s=\frac {%
km}{km-1}$ is attained or not in
\begin{equation*}
\sup\{s:\mathcal{L}(E_{1},...,E_{m};\mathbb{K})= \Pi_{\mathrm{mult}%
(2;s)}(E_{1},...,E_{m};\mathbb{K})\text{ for all }E_{j}\text{ of cotype }%
k\}\geq\frac{km}{km-1}.
\end{equation*}
The fact that $\frac{2m}{2m-1}$ can replace $\frac{km}{km-1}$ in all cases
ensures that $s=\frac{km}{km-1}$ is not attained and thus improves the
estimate of \cite[Corollary 3.1]{laa}, which can be improved to
\begin{equation*}
\sup\{s:\mathcal{L}(E_{1},...,E_{m};\mathbb{K})=\Pi_{\mathrm{mult}%
(2;s)}(E_{1},...,E_{m};\mathbb{K})\text{ for all }E_{j}\text{ of cotype }%
k\}\in\left[ \frac{2m}{2m-1},\frac{2km}{2km+k-2m}\right]
\end{equation*}
if $k>2$ and $m\geq k$ is a positive integer.

More precisely we prove the following more general result. We remark that
the part (i) of the above theorem can be also derived from \cite{n,dimant},
although it is not explicitly written in the aforementioned papers:

\begin{theorem}
\label{000} Let $m\geq2$ and let $r\in\left[ \frac{2m}{m+1},\infty\right)$.
Then the optimal $s$ such that
\begin{equation*}
\mathcal{L}\left( E_{1},...,E_{m};\mathbb{K}\right) = \Pi_{\mathrm{mult}%
\left( r;s\right) }\left( E_{1},...,E_{m};\mathbb{K}\right) .
\end{equation*}
for all Banach spaces $E_{1},...,E_{m}$ is:

\begin{itemize}
\item[(i)] $\frac{2mr}{mr+2m-r}$ if $r\in\left[ \frac{2m}{m+1},2\right]$;

\item[(ii)] $\frac{mr}{mr+1-r}$ if $r\in\left( 2,\infty\right)$.
\end{itemize}
\end{theorem}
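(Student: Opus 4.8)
The plan is to prove each case by splitting it into two halves: the \emph{coincidence} $\mathcal{L}(E_{1},\dots,E_{m};\mathbb{K})=\Pi_{\mathrm{mult}(r;s^{\ast})}$ at the claimed value $s^{\ast}$ for all Banach spaces, and the \emph{failure} of coincidence for every $s>s^{\ast}$ on a concrete family of spaces, so that $s^{\ast}$ is exactly the supremum. Throughout I will use the monotonicity that $(r;s)$--summability is stronger for larger $s$, so a form that is not $(r;s_{1})$--summing is not $(r;s)$--summing for any $s>s_{1}$.

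For the coincidence half I would start from the Bohnenblust--Hille inequality (Theorem 1.2), which gives $\mathcal{L}(E_{1},\dots,E_{m};\mathbb{K})=\Pi_{\mathrm{mult}(\frac{2m}{m+1};1)}$, and transport it to the pair $(r;s^{\ast})$ via the inclusion theorem for multiple summing operators. A direct computation shows that in case (i) the defect $\frac{1}{s^{\ast}}-\frac{1}{r}$ equals $\frac{m-1}{2m}$ for all $r\in[\frac{2m}{m+1},2]$, i.e.\ exactly the Bohnenblust--Hille defect, while in case (ii) it equals $\frac{(m-1)(r-1)}{mr}\geq\frac{m-1}{2m}$ for $r\geq2$. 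Since in both cases $\frac{2m}{m+1}\leq r$, $1\leq s^{\ast}$ and $\frac{1}{1}-\frac{m+1}{2m}\leq\frac{1}{s^{\ast}}-\frac{1}{r}$, the inclusion theorem yields $\Pi_{\mathrm{mult}(\frac{2m}{m+1};1)}\subseteq\Pi_{\mathrm{mult}(r;s^{\ast})}\subseteq\mathcal{L}$, hence equality. This is the part that can also be quoted from \cite{n,dimant}.

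For optimality in case (i) I would invoke Theorem \ref{xx}: inverting $r<\frac{2ms}{s+2m-ms}$ gives $s>\frac{2mr}{mr+2m-r}=s^{\ast}$, and since $r\leq2<2m$ forces $s^{\ast}<2$, for any $s\in(s^{\ast},2)$ one applies Theorem \ref{xx} with $p=2$ (so $p^{\ast}=2>s$) to produce an $m$--linear form on $\ell_{2}\times\cdots\times\ell_{2}$ that is not $(r;s)$--summing. Monotonicity in $s$ then removes the coincidence for every $s>s^{\ast}$.

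The genuinely new point, and the main obstacle, is optimality in case (ii): for $r>2$ one has $s^{\ast}=\frac{mr}{mr+1-r}<\frac{2mr}{mr+2m-r}$, so the Kahane--Salem--Zygmund estimate behind Theorem \ref{xx} is too weak (its lower bound for $\pi_{\mathrm{mult}(r;s)}(A_{n})/\Vert A_{n}\Vert$ is, after the $\frac{1}{p}$ terms cancel, $p$--independent and blows up only past the larger threshold $\frac{2mr}{mr+2m-r}$). Instead I would test with the diagonal form $T_{n}(x^{(1)},\dots,x^{(m)})=\sum_{j=1}^{n}x^{(1)}_{j}\cdots x^{(m)}_{j}$ on $\ell_{p}^{n}$ with $p=\frac{s}{s-1}$, which is legitimate because $s^{\ast}<\frac{m}{m-1}$ forces $p>m$. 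A Hölder estimate gives $\Vert T_{n}\Vert=n^{1-m/p}$, while feeding the canonical vectors $e_{1},\dots,e_{n}$ into the summability inequality collapses the sum to the diagonal and yields $\pi_{\mathrm{mult}(r;s)}(T_{n})\geq n^{1/r}$ (here $\Vert(e_{j})\Vert_{w,s}=1$ since $p^{\ast}=s$). Hence $\pi_{\mathrm{mult}(r;s)}(T_{n})/\Vert T_{n}\Vert\geq n^{1/r-1+m/p}$, and with $p=\frac{s}{s-1}$ the exponent is positive exactly when $s>s^{\ast}$, so $\Pi_{\mathrm{mult}(r;s)}(^{m}\ell_{p};\mathbb{K})$ fails to be closed; by the Open Mapping Theorem together with \cite{drew} the coincidence fails (and the complement is spaceable). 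Monotonicity in $s$ extends this from $s\in(s^{\ast},\frac{m}{m-1})$ to all $s>s^{\ast}$, completing both cases.
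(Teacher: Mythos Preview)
Your argument is correct, and the overall strategy matches the paper's, but two of your four sub-arguments take a different route.

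For the \emph{coincidence} half the paper does not go through an inclusion theorem. In case~(i) it chooses $q=\frac{2mr}{r+mr-2m}\in[2m,\infty]$, applies the multilinear Hardy--Littlewood inequality on $X_{q}\times\cdots\times X_{q}$ (so that the exponent is exactly $r$), and then lifts to arbitrary $E_{1},\dots,E_{m}$ via the standard factorization $u_{k}:X_{q}\to E_{k}$ from \cite[Proposition~2.2]{Di}; this gives $q^{\ast}=s^{\ast}$ on the right. In case~(ii) it simply quotes \cite{dimant}. Your route via Bohnenblust--Hille plus the inclusion theorem is slicker, but you should be aware that the inclusion theorem for \emph{multiple} summing operators is not automatic from the linear one and is known to fail in general; fortunately it does hold when the target index satisfies $s_{2}\le 2$ (P\'erez--Garc\'{\i}a), and here $s^{\ast}<\frac{m}{m-1}\le 2$, so your application is legitimate---just cite the relevant result rather than treating it as folklore.

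For \emph{optimality in~(i)} you and the paper do the same thing (Theorem~\ref{xx}/Corollary~\ref{corol}); your choice $p=2$ is the simplest instance.

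For \emph{optimality in~(ii)} the paper argues indirectly: it takes $s>s^{\ast}=p^{\ast}$, sets $q^{\ast}=s$ (so $m<q<p$), and invokes the known sharpness of the Hardy--Littlewood exponent $\frac{q}{q-m}$ on $\ell_{q}$ from \cite{dimant} to conclude that $r=\frac{p}{p-m}<\frac{q}{q-m}$ is too small. Your diagonal-form computation with $T_{n}$ on $\ell_{p}^{n}$, $p=\frac{s}{s-1}$, is a direct and self-contained proof of exactly this sharpness (indeed it is essentially how \cite{dimant} establishes it), so it buys you independence from that citation at the cost of a short calculation. The final monotonicity step, passing from $s\in(s^{\ast},\frac{m}{m-1})$ to all $s>s^{\ast}$, is needed in both approaches and you handle it correctly.
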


\begin{proof}
Proof of (i). For $q\geq 1$, let $X_{q}=\ell _{q}$ and let us define $%
X_{\infty }=c_{0}$. Let
\begin{equation*}
q:=\frac{2mr}{r+mr-2m}.
\end{equation*}%
Since $r\in \left[ \frac{2m}{m+1},2\right] $ we have that $q\in \left[
2m,\infty \right] $. Since
\begin{equation*}
\frac{m}{q}\leq \frac{1}{2}\qquad \mbox{and}\qquad r=\frac{2m}{m+1-\frac{2m}{%
q}},
\end{equation*}%
from the multilinear Hardy--Littlewood inequality (see, for instance, \cite%
{alb,hardy,pra}) there is a constant $C\geq 1$ such that
\begin{equation*}
\left( \sum\limits_{j_{1},....,j_{m}=1}^{\infty }\left\vert A\left(
e_{j_{1}},...,e_{j_{m}}\right) \right\vert ^{r}\right) ^{\frac{1}{r}}\leq
C\left\Vert A\right\Vert
\end{equation*}%
for all continuous $m$--linear forms $A:X_{q}\times \cdots \times
X_{q}\rightarrow \mathbb{K}$. Let $T\in \mathcal{L}\left( E_{1},...,E_{m};%
\mathbb{K}\right) $ and $(x_{j}^{(k)})_{j=1}^{\infty }\in \ell _{q^{\ast
}}^{w}(E_{k})$, $k=1,...,m$. Now we use a standard argument (see \cite{n})
to lift the result from $X_{q}$ to arbitrary Banach spaces. From \cite[%
Proposition 2.2]{Di} we know that exist a continuous linear operator $%
u_{k}:X_{q}\rightarrow E_{k}$ so that $u_{k}\cdot e_{j_{k}}=x_{j_{k}}^{(k)}$
and
\begin{equation*}
\left\Vert u_{k}\right\Vert =\left\Vert (x_{j}^{(k)})_{j=1}^{\infty
}\right\Vert _{w,q^{\ast }}
\end{equation*}%
for all $k=1,...,m$. Therefore, $S:X_{q}\times \cdots \times
X_{q}\rightarrow \mathbb{K}$ defined by $S(y_{1},...,y_{m})=T\left(
u_{1}\cdot y_{1},...,u_{m}\cdot y_{m}\right) $ is $m$--linear, continuous
and
\begin{equation*}
\Vert S\Vert \leq \Vert T\Vert \prod_{k=1}^{m}\Vert u_{k}\Vert
=\prod_{k=1}^{m}\left\Vert (x_{j}^{(k)})_{j=1}^{\infty }\right\Vert
_{w,q^{\ast }}.
\end{equation*}%
Hence
\begin{equation*}
\left( \sum\limits_{j_{1},...,j_{m}=1}^{\infty }\left\vert T\left(
x_{j_{1}}^{(1)},...,x_{j_{m}}^{(m)}\right) \right\vert ^{r}\right) ^{\frac{1%
}{r}}\leq C\Vert T\Vert \prod_{k=1}^{m}\left\Vert
(x_{j}^{(k)})_{j=1}^{\infty }\right\Vert _{w,q^{\ast }},
\end{equation*}%
and, as $q^{\ast }=\frac{2mr}{mr+2m-r}$, the last inequality proves that,
for all $m\geq 2$ and $r\in \left[ \frac{2m}{m+1},2\right] $, we have
\begin{equation*}
\mathcal{L}\left( E_{1},...,E_{m};\mathbb{K}\right) =\Pi _{\mathrm{mult}%
\left( r;\frac{2mr}{mr+2m-r}\right) }\left( E_{1},...,E_{m};\mathbb{K}%
\right) .
\end{equation*}%
Now let us prove the optimality. From what we have just proved, for $r\in %
\left[ \frac{2m}{m+1},2\right] $, we have
\begin{eqnarray*}
U_{m,r}:= &&\sup \left\{ s:\mathcal{L}(E_{1},...,E_{m};\mathbb{K})=\Pi _{%
\mathrm{mult}\left( r;s\right) }(E_{1},...,E_{m};\mathbb{K})\text{ for all
Banach spaces }E_{j}\right\}  \\
&\geq &\frac{2mr}{mr+2m-r}.
\end{eqnarray*}%
From Corollary \ref{corol} we have, for $2\leq p<\frac{2mr}{r+mr-2m}$,
\begin{equation*}
\sup \left\{ s:\mathcal{L}(^{m}\ell _{p};\mathbb{K})=\Pi _{\mathrm{mult}%
\left( r;s\right) }(^{m}\ell _{p};\mathbb{K})\right\} \leq \frac{2mr}{mr+2m-r%
}.
\end{equation*}%
Therefore,
\begin{equation*}
U_{m,r}\leq \sup \left\{ s:\mathcal{L}(^{m}\ell _{p};\mathbb{K})=\Pi _{%
\mathrm{mult}\left( r;s\right) }(^{m}\ell _{p};\mathbb{K})\right\} \leq
\frac{2mr}{mr+2m-r}
\end{equation*}%
and we conclude that $U_{m,r}=\frac{2mr}{mr+2m-r}$.

Proof of (ii). Given $r>2$ consider $m<p<2m$ such that $r=\frac{p}{p-m}$. In
this case, $p=\frac{mr}{r-1}$ and $p^{\ast}=\frac{mr}{mr+1-r}$. From \cite%
{dimant} we know that
\begin{equation}
\Pi_{\mathrm{mult}(\frac{p}{p-m};p^{\ast})}(E_{1},...,E_{m};\mathbb{K})=%
\mathcal{L}(E_{1},...,E_{m};\mathbb{K})  \label{9k}
\end{equation}
for all Banach spaces $E_{1},...,E_{m}$, i.e.,
\begin{equation*}
\Pi_{\mathrm{mult}(r;\frac{mr}{mr+1-r})}(E_{1},...,E_{m};\mathbb{K})=%
\mathcal{L}(E_{1},...,E_{m};\mathbb{K})
\end{equation*}
for all Banach spaces $E_{1},...,E_{m}$. Also, for $E_{j}=\ell_{p}$ for all $%
j$ we know that
\begin{equation}
\Pi_{\mathrm{mult}(\frac{p}{p-m};p^{\ast})}(\ell_{p},...,\ell_{p};\mathbb{K}%
)=\mathcal{L}(\ell_{p},...,\ell_{p};\mathbb{K})  \label{in}
\end{equation}
is optimal, i.e., $\frac{p}{p-m}$ cannot be improved. If $s>p^{\ast}$ let $%
q^{\ast}=s$ and then $q<p$ (we can always suppose $s$ close to $p^{\ast}$
and thus $m<q<2m$)$.$ From (\ref{9k}) we have
\begin{equation*}
\Pi_{\mathrm{mult}(\frac{q}{q-m};q^{\ast})}(E_{1},...,E_{m};\mathbb{K})=%
\mathcal{L}(E_{1},...,E_{m};\mathbb{K})
\end{equation*}
and from (\ref{in}) in the case of $\ell_{q}$ instead of $\ell_{p}$, we have
\begin{equation*}
\Pi_{\mathrm{mult}(\frac{q}{q-m};q^{\ast})}(\ell_{q},...,\ell_{q};\mathbb{K}%
)=\mathcal{L}(\ell_{q},...,\ell_{q};\mathbb{K})
\end{equation*}
and $\frac{q}{q-m}$ is optimal. Since $\frac{q}{q-m}>\frac{p}{p-m}$ we
conclude that%
\begin{equation*}
\Pi_{\mathrm{mult}(\frac{p}{p-m};q^{\ast})}(\ell_{q},...,\ell_{q};\mathbb{K}%
)\neq\mathcal{L}(\ell_{q},...,\ell_{q};\mathbb{K}),
\end{equation*}
i.e.,%
\begin{equation*}
\Pi_{\mathrm{mult}(\frac{p}{p-m};s)}(\ell_{q},...,\ell_{q};\mathbb{K})\neq%
\mathcal{L}(\ell_{q},...,\ell_{q};\mathbb{K}).
\end{equation*}
and the proof is done.
\end{proof}

\begin{figure}[h]
\centering
\includegraphics[scale=0.45]{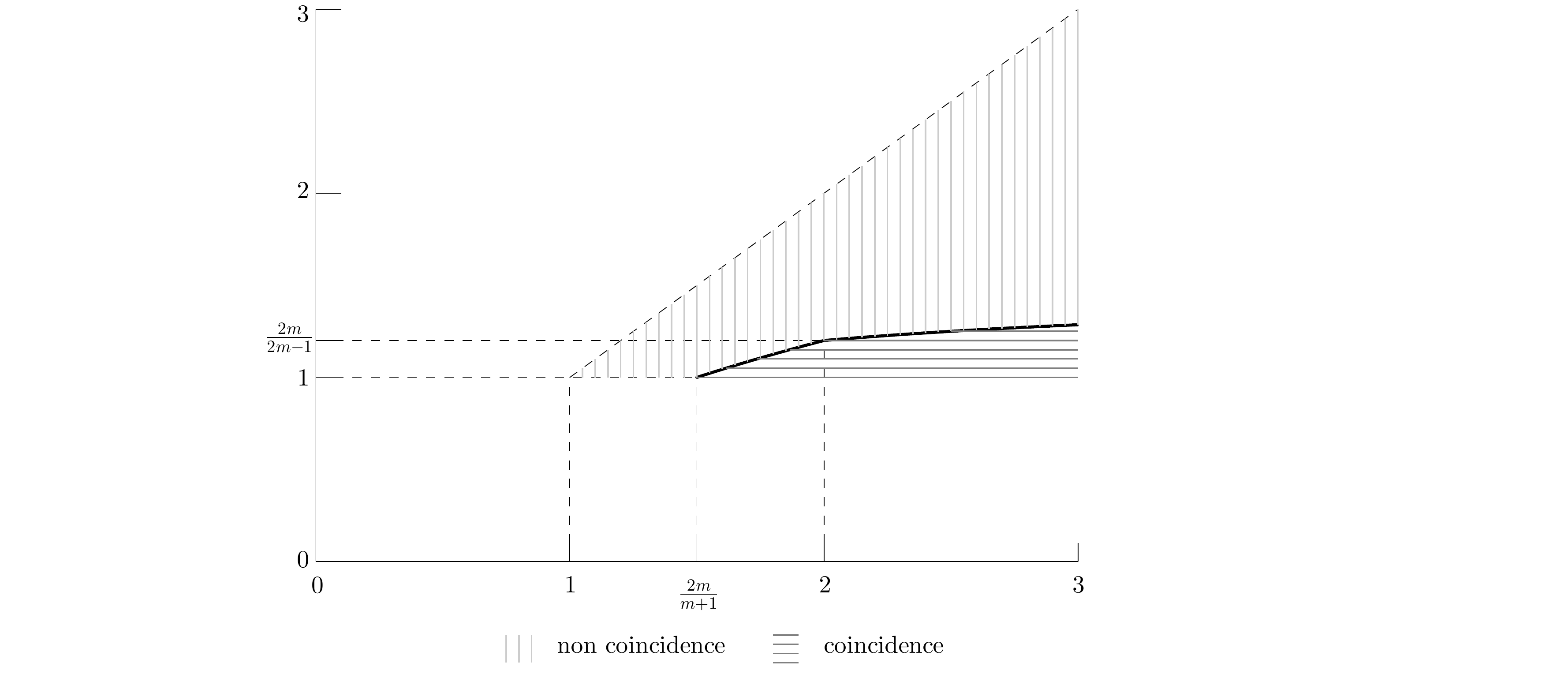}
\caption{Coincidence zone for $\Pi_{\mathrm{mult}(r;s)}(E_1,...,E_m;\mathbb{K%
}),(r,s)\in[1,\infty)\times[1,r]$.}
\label{figure1}
\end{figure}

\bigskip

The table below details the results of coincidence and non coincidence in
the ``boundaries" of Figure \ref{figure1}. We can clearly see that the only
case that remains open is the case $(r;s)$ with $r>2$ and $\frac{2m}{2m-1}<
s\leq\frac{mr}{mr+1-r}$.

\bigskip

\begin{center}
\begin{tabular}{|c|c|c|}
\hline
$r\geq1$ & $s=r$ & non coincidence \\ \hline
$1\leq r < \frac{2m}{m+1}$ & $s = 1$ & non coincidence \\ \hline
$\frac{2m}{m+1}\leq r \leq2$ & $s=\frac{2mr}{mr+2m-r}$ & coincidence \\
\hline
$r\geq\frac{2m}{m+1} $ & $s=1$ & coincidence \\ \hline
$r> 2 $ & $s=\frac{mr}{mr+1-r}$ & coincidence \\ \hline
\end{tabular}
\end{center}

\bigskip

\section{Multiple $\left( r;s\right) $--summing forms in $c_{0}$ and $%
\ell_{\infty}$ spaces}

From standard localization procedures, coincidence results for $c_{0}$ and $%
\ell_{\infty}$ are the same; so we will restrict our attention to $c_{0}.$
It is well known that $\Pi_{\mathrm{mult}\left( r;s\right) }\left( ^{m}c_{0};%
\mathbb{K}\right) =\mathcal{L}\left( ^{m}c_{0};\mathbb{K}\right) $ whenever $%
r\geq s\geq2$ (see \cite{REMC2010})$.$ When $s=1,$ as a consequence of the
Bohnenblust--Hille inequality, we also know that the equality holds if and
only if $s\geq\frac{2m}{m+1}$. The next result encompasses essentially all
possible cases:

\begin{proposition}
If $s\in\left[ 1,\infty\right) $ then
\begin{equation*}
\inf\left\{ r:\Pi_{\mathrm{mult}\left( r;s\right) }\left( ^{m}c_{0};\mathbb{K%
}\right) =\mathcal{L}\left( ^{m}c_{0};\mathbb{K}\right) \right\} =\left\{
\begin{array}{lll}
\dfrac{2m}{m+1} & \text{if} & 1\leq s\leq\dfrac{2m}{m+1}\vspace{0.2cm}, \\
s & \text{if} & s\geq\dfrac{2m}{m+1}.%
\end{array}
\right.
\end{equation*}
\end{proposition}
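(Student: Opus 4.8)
The plan is to identify the infimum with the single threshold $\max\{s,\tfrac{2m}{m+1}\}$, which is exactly the piecewise value in the statement, and to split the work into an easy lower bound and a harder upper bound. The starting point is two monotonicity principles for $\Pi_{\mathrm{mult}(r;s)}$. First, since $\ell_{r}\hookrightarrow\ell_{r'}$ for $r\le r'$, a multiple $(r;s)$--summing form is automatically multiple $(r';s)$--summing; hence, for fixed $s$, the set of $r$ giving coincidence is an interval $[r_{0},\infty)$ or $(r_{0},\infty)$, so the infimum is a genuine threshold. Second, because $\Vert\cdot\Vert_{w,s'}\ge\Vert\cdot\Vert_{w,s}$ whenever $s'\le s$, a multiple $(r;s)$--summing form is multiple $(r;s')$--summing for every $s'\le s$; thus larger second exponents are stronger hypotheses, and coincidence for a given $s$ follows from coincidence for any larger second exponent.

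For the lower bound I would show that coincidence forces $r\ge\max\{s,\tfrac{2m}{m+1}\}$. The constraint $r\ge s$ is built into the definition of multiple $(r;s)$--summing, so the infimum is at least $s$. On the other hand, if $r<\tfrac{2m}{m+1}$ then coincidence must fail: since $s\ge1$, the second monotonicity makes every multiple $(r;s)$--summing form multiple $(r;1)$--summing, so coincidence at $(r;s)$ would force coincidence at $(r;1)$, contradicting the optimality of $\tfrac{2m}{m+1}$ in the Bohnenblust--Hille inequality. Combining gives $\inf\ge\max\{s,\tfrac{2m}{m+1}\}$, which already equals the claimed value.

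The whole content therefore lies in the upper bound, and by the two monotonicities it reduces to one diagonal coincidence: every continuous $m$--linear form on $c_{0}$ is multiple $(\rho;\rho)$--summing for each $\rho>\tfrac{2m}{m+1}$. Granting this, fix $s$ and any $\rho>\max\{s,\tfrac{2m}{m+1}\}$; then $\rho>s$, so the second monotonicity upgrades $(\rho;\rho)$--summability to $(\rho;s)$--summability, giving coincidence at $(\rho;s)$ and hence $\inf\le\rho$; letting $\rho\downarrow\max\{s,\tfrac{2m}{m+1}\}$ closes the estimate. For $\rho\ge2$ this diagonal coincidence is exactly the known fact that $\Pi_{\mathrm{mult}(r;s)}(^{m}c_{0};\mathbb{K})=\mathcal{L}(^{m}c_{0};\mathbb{K})$ whenever $r\ge s\ge2$, applied with $r=s=\rho$.

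The main obstacle is the range $\tfrac{2m}{m+1}<\rho<2$, where no cited coincidence applies and a direct computation shows that neither the Hardy--Littlewood coincidence of Theorem \ref{000}(i) nor interpolation between it and the point $(2;2)$ reaches the diagonal. Here I would exploit the special form of the weak norm in $c_{0}$: for finitely supported sequences one has $\Vert(x_{j})_{j=1}^{n}\Vert_{w,\rho}=\max_{i}\big(\sum_{j}|(x_{j})_{i}|^{\rho}\big)^{1/\rho}$, so the hypothesis says each ``column'' of the coefficient matrix lies in the unit ball of $\ell_{\rho}$. Writing $T(x_{j_{1}}^{(1)},\dots,x_{j_{m}}^{(m)})=\sum_{\mathbf{i}}T(e_{\mathbf{i}})\prod_{k}(x_{j_{k}}^{(k)})_{i_{k}}$, the target estimate $\big(\sum_{\mathbf{j}}|T(x_{\mathbf{j}})|^{\rho}\big)^{1/\rho}\le C\Vert T\Vert$ becomes a concrete Bohnenblust--Hille/Hardy--Littlewood type inequality for the coefficient tensor contracted against column-$\ell_{\rho}$-bounded matrices. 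The delicate point is that one must genuinely use $\Vert T\Vert_{\mathcal{L}(^{m}c_{0})}$ and not merely the Bohnenblust--Hille bound on $(T(e_{\mathbf{i}}))_{\mathbf{i}}$: the rank-one case already holds with constant $1$ precisely because $\Vert T\Vert$ controls the $\ell_{1}$ norms of the functionals involved, while the weaker coefficient bound alone makes the inequality false. Establishing (or citing) this inequality for $c_{0}$ is where the effort concentrates; once it is in hand, the usual density and absolute-convergence reduction to finitely supported rational sequences transfers the estimate from $\ell_{\infty}^{n}$ to all of $c_{0}$ and finishes the proof.
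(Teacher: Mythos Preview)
Your lower bound and the two monotonicity principles are correct, but the upper-bound reduction is a genuine gap. You reduce everything to the diagonal coincidence
\[
\Pi_{\mathrm{mult}(\rho;\rho)}(^{m}c_{0};\mathbb{K})=\mathcal{L}(^{m}c_{0};\mathbb{K})\qquad\text{for every }\rho\in\Bigl(\tfrac{2m}{m+1},2\Bigr),
\]
and you concede that establishing this is ``where the effort concentrates.'' But this diagonal case is precisely the one the paper itself lists as \emph{not known} (see the table following the proposition). The concrete inequality you describe---controlling $\bigl(\sum_{\mathbf j}|T(x_{\mathbf j})|^{\rho}\bigr)^{1/\rho}$ by $\Vert T\Vert$ when each coefficient matrix has $\ell_{\rho}$-bounded columns---is not available in the literature, and the heuristic you offer (that one must use $\Vert T\Vert_{\mathcal L(^{m}c_0)}$ rather than the Bohnenblust--Hille coefficient bound) is an observation about what a proof would have to exploit, not a proof. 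So your plan trades the proposition for a strictly harder open problem.

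The paper avoids the diagonal altogether. For $1<s<\tfrac{2m}{m+1}$ it invokes a multilinear adaptation of a deep extrapolation theorem of Pisier, which upgrades the Bohnenblust--Hille coincidence at $\bigl(\tfrac{2m}{m+1};1\bigr)$ to coincidence at $\bigl(\tfrac{2m}{m+1};s\bigr)$; this step moves the second exponent \emph{up}, so it is not a consequence of your monotonicity (which only moves it down) and is the key external ingredient you are missing. With those endpoints in hand, the paper then interpolates (in the sense of \cite{REMC2010}) between $\bigl(\tfrac{2m}{m+1};\tfrac{2m}{m+1}-\epsilon\bigr)$ and $(2;2)$ to obtain, for each $\tfrac{2m}{m+1}<r<2$, coincidence at $(r;r-\delta)$ for every sufficiently small $\delta>0$. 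That off-diagonal family is exactly enough to compute the infimum---given $s$ in this range, take $r$ slightly above $s$ and $\delta=r-s$---without ever touching the open case $s=r$.
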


\begin{proof}
The case $r\geq s\geq 2$ is immediate (see \cite[Corollary 4.10]{REMC2010}).
The Bohnenblust--Hille inequality assures that when $s=1$ the best choice
for $r$ is $\frac{2m}{m+1}$. So, it is obvious that for $1\leq s\leq \frac{2m%
}{m+1}$ the best value for $r$ is not smaller than $\frac{2m}{m+1}.$ More
precisely,%
\begin{equation*}
\Pi _{\mathrm{mult}\left( r;s\right) }\left( ^{m}c_{0};\mathbb{K}\right)
\neq \mathcal{L}\left( ^{m}c_{0};\mathbb{K}\right)
\end{equation*}%
whenever $\left( r,s\right) \in \left[ 1,\frac{2m}{m+1}\right) \times \left[
1,\frac{2m}{m+1}\right] $ and $r\geq s$. An adaptation of deep result due to
Pisier (\cite{pp}) to multiple summing operators (see \cite[Theorem 3.16]%
{arc} or \cite[Lemma 5.2]{REMC2010}) combined with the coincidence result
for $\left( r;s\right) =\left( \frac{2m}{m+1};1\right) $ tells us that we
also have coincidences for $\left( \frac{2m}{m+1};s\right) $ for all $1<s<%
\frac{2m}{m+1}.$ The remaining case $\left( r;s\right) $ with $\frac{2m}{m+1}%
<s<2$ follows from an interpolation procedure in the lines of \cite{REMC2010}%
. More precisely, given $\frac{2m}{m+1}<r<2$ and $0<\delta <\frac{r(2-\theta
)-2}{2-\theta }$, where $\theta =\frac{mr+r-2m}{r}$, consider%
\begin{equation*}
\epsilon =\frac{2m}{m+1}-\frac{2(1-\theta )(r-\delta )}{2-\theta (r-\delta )}%
.
\end{equation*}%
Since $1<\frac{2m}{m+1}-\epsilon <\frac{2m}{m+1}$, we know that $\mathcal{L}%
(^{m}c_{0};\mathbb{K})=\Pi _{\mathrm{mult}\left( \frac{2m}{m+1};\frac{2m}{m+1%
}-\epsilon \right) }(^{m}c_{0};\mathbb{K})$. Since $\mathcal{L}(^{m}c_{0};%
\mathbb{K})=\Pi _{\mathrm{mult}\left( 2;2\right) }(^{m}c_{0};\mathbb{K})$
and
\begin{equation*}
\dfrac{1}{r}=\dfrac{\theta }{2}+\dfrac{1-\theta }{\frac{2m}{m+1}}\qquad
\text{and}\qquad \dfrac{1}{r-\delta }=\dfrac{\theta }{2}+\dfrac{1-\theta }{%
\frac{2m}{m+1}-\epsilon },
\end{equation*}%
by interpolation we conclude $\mathcal{L}(^{m}c_{0};\mathbb{K})=\Pi _{%
\mathrm{mult}\left( r;r-\delta \right) }(^{m}c_{0};\mathbb{K})$.
\end{proof}

\bigskip

\begin{figure}[h]
\centering
\includegraphics[scale=0.45]{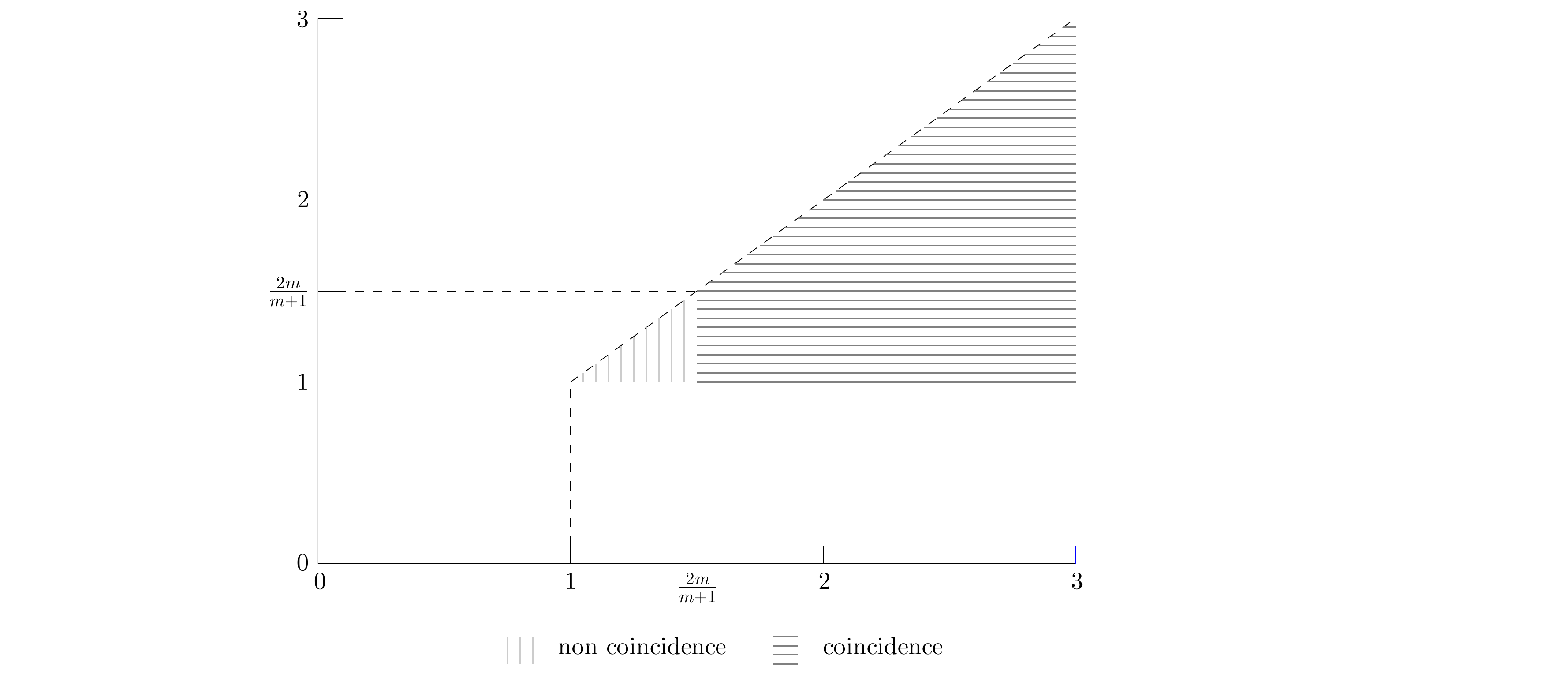}
\caption{Coincidence zone for $\Pi_{\mathrm{mult}(r;s)}(^m c_0;\mathbb{K}%
),(r,s)\in[1,\infty)\times[1,r]$.}
\label{figure2}
\end{figure}

\bigskip

The table below details the results of coincidence and non coincidence in
the ``boundaries" of Figure \ref{figure2}.

\bigskip

\begin{center}
\begin{tabular}{|c|c|c|}
\hline
$1\leq r < \frac{2m}{m+1}$ & $s = 1$ & non coincidence \\ \hline
$r=\frac{2m}{m+1}$ & $1\leq s<\frac{2m}{m+1}$ & coincidence \\ \hline
$r\geq\frac{2m}{m+1} $ & $s=1$ & coincidence \\ \hline
$1\leq r<\frac{2m}{m+1}$ & $s=r$ & non coincidence \\ \hline
$\frac{2m}{m+1}\leq r< 2 $ & $s=r$ & not known \\ \hline
$r\geq2 $ & $s=r$ & coincidence \\ \hline
\end{tabular}
\end{center}

\bigskip

We can see that the only case that remains open is the case $(r;s)$ with $%
\frac{2m}{m+1}\leq r< 2$ and $s=r$.

\section{Absolutely summing multilinear operators}

For $1\leq s<\infty$ and $r\geq\frac{s}{m}$ recall that a continuous $m$%
--linear operator $A:E_{1}\times\cdots\times E_{m}\rightarrow F$ is
absolutely $\left( r;s\right) $--summing if there is a $C>0$ such that%
\begin{equation*}
\left( \displaystyle\sum\limits_{j=1}^{n} \left\Vert A(
x_{j}^{(1)},...,x_{j}^{(m)}) \right\Vert ^{r}\right) ^{\frac{1}{r}}\leq C %
\displaystyle\prod\limits_{k=1}^{m} \sup_{\varphi\in B_{E_{k}^{\ast}}}\left( %
\displaystyle\sum\limits_{j=1}^{n} \left\vert \varphi( x_{j}^{(k)})
\right\vert ^{s}\right) ^{\frac{1}{s}}
\end{equation*}
for all positive integers $n$ and all $( x_{j}^{(k)}) _{j=1}^{n}\in E_{k}$, $%
k=1,...,m$. We represent the class of all absolutely $\left( r;s\right) $%
--summing multiple operators from $E_{1},....,E_{m}$ to $F$ by $\Pi _{%
\mathrm{as}\left( r;s\right) }\left( E_{1},...,E_{m};F\right) $ and $\pi_{%
\mathrm{as}\left( r;s\right) }\left( T\right) $ denotes the infimum over all
$C$ as above.

Combining the Defant--Voigt Theorem (first stated and proved in \cite[%
Theorem 3.10]{alencarmatos}; see also, e.g., \cite[Theorem 3]{aromlacruz}
(for complex scalars) or \cite[Corollary 3.2]{portmath}) and a canonical
inclusion theorem (see \cite{indag,matos2}) we conclude that, for $r,s\geq1$
and $s\leq\frac{mr}{mr+1-r},$ we have
\begin{equation*}
\Pi_{\mathrm{as}(r;s)}(E_{1},...,E_{m};\mathbb{K})=\mathcal{L}%
(E_{1},...,E_{m};\mathbb{K})
\end{equation*}
for all $E_{1},....,E_{m}.$

From \cite[Proposition 1]{zal} it is possible to prove that for $r>1$ and $%
\frac{r}{mr+1-r}\leq t<r$,
\begin{equation*}
\Pi _{\mathrm{as}(t;\frac{mr}{mr+1-r})}(E_{1},...,E_{m};\mathbb{K})\neq
\mathcal{L}(E_{1},...,E_{m};\mathbb{K})
\end{equation*}%
for some choices of $E_{1},...,E_{m}$. In fact, given $r>1$, consider $p>m$
such that $\frac{p}{p-m}=r$ and observe that in this case $\frac{mr}{mr+1-r}%
=p^{\ast }$ and thus we just need to prove that for all $\frac{p^{\ast }}{m}%
\leq t<\frac{p}{p-m}$, we have
\begin{equation*}
\Pi _{\mathrm{as}(t;p^{\ast })}(E_{1},...,E_{m};\mathbb{K})\neq \mathcal{L}%
(E_{1},...,E_{m};\mathbb{K}).
\end{equation*}%
From \cite[Proposition 1]{zal} we know that if $p>m$ and $\frac{p^{\ast }}{m}%
\leq t<\frac{p}{p-m}$, then there is a continuous $m$--linear form $\phi $
such that
\begin{equation*}
\phi \notin \Pi _{\mathrm{as}(t;p^{\ast })}(E_{1},...,E_{m};\mathbb{K}),
\end{equation*}%
i.e.,
\begin{equation*}
\Pi _{\mathrm{as}(t;p^{\ast })}(E_{1},...,E_{m};\mathbb{K})\neq \mathcal{L}%
(E_{1},...,E_{m};\mathbb{K}).
\end{equation*}

All these information together give us the following figure:

\bigskip

\begin{figure}[h]
\centering
\includegraphics[scale=0.45]{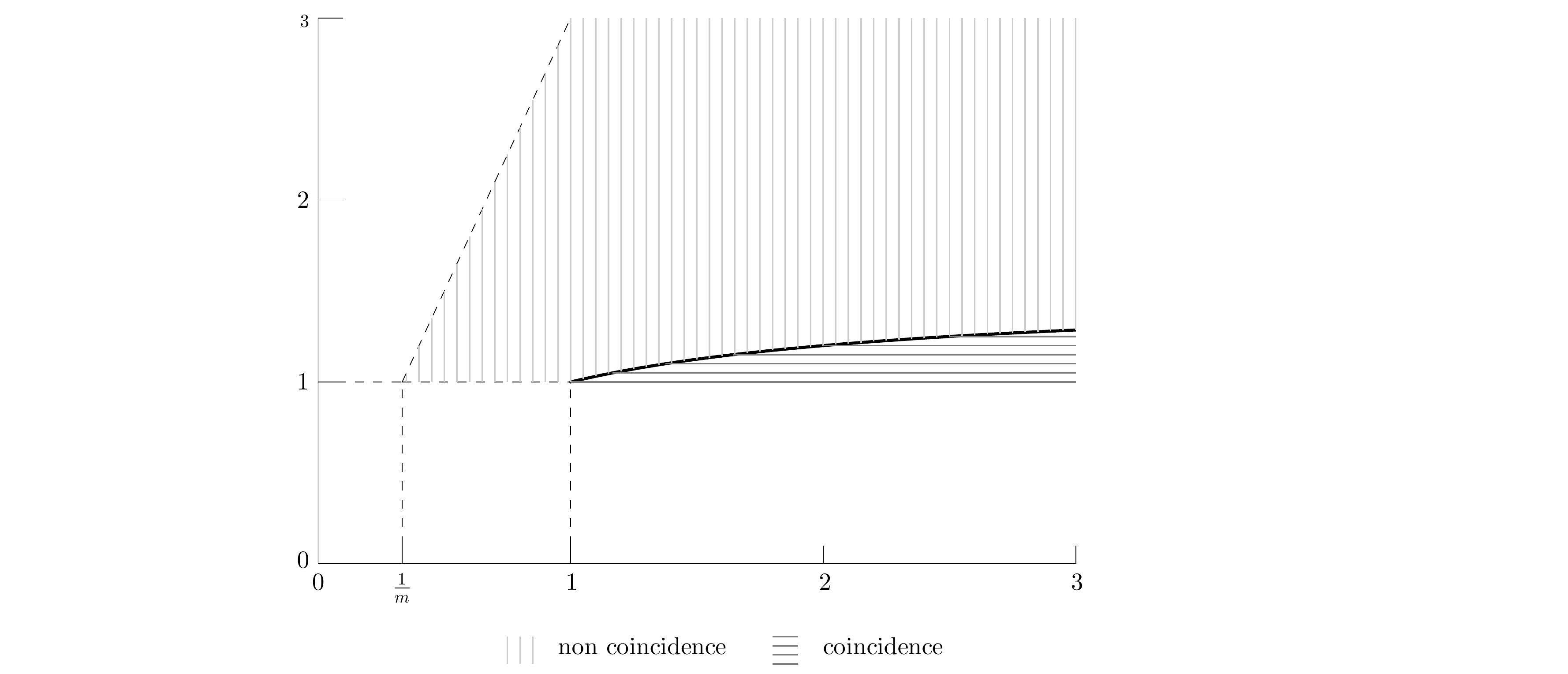}
\caption{ Coincidence zone for $\Pi_{\mathrm{as}(r;s)}(E_1,...,E_m;\mathbb{K}%
),(r,s)\in[1,\infty)\times[1,mr]$.}
\label{figure3}
\end{figure}

\bigskip

The table below details the results of coincidence and non coincidence in
the \textquotedblleft boundaries" of Figure \ref{figure3}. The only possible
open situation is the case $(r;s)$ with $s=1$ and $r<1$, which we answer in
the next proposition; the idea of the proof of this proposition is borrowed
from \cite{dimant}.

\bigskip

\begin{center}
\begin{tabular}{|c|c|c|}
\hline
$\frac{1}{m} \leq r<1$ & $s=1$ & not known \\ \hline
$r>\frac{1}{m}$ & $s=mr$ & non coincidence \\ \hline
$r\geq1$ & $s=1$ & coincidence \\ \hline
$r\geq1$ & $s=\frac{mr}{mr+1-r}$ & coincidence \\ \hline
\end{tabular}
\end{center}

\bigskip

\begin{proposition}
If $m\geq 1$ is a positive integer, then
\begin{equation*}
\inf \left\{ r:\mathcal{L}(E_{1},...,E_{m};\mathbb{K})=\Pi _{\mathrm{as}%
\left( r;1\right) }(E_{1},...,E_{m};\mathbb{K})\text{ for all
infinite--dimensional Banach spaces }E_{j}\right\} =1.
\end{equation*}
\end{proposition}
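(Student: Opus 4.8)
The plan is to prove the two inequalities separately. First, the upper bound: I must show that $1$ is attained as an infimum from above, i.e.\ that for every $r>1$ one has $\mathcal{L}(E_1,\dots,E_m;\mathbb{K})=\Pi_{\mathrm{as}(r;1)}(E_1,\dots,E_m;\mathbb{K})$ for all $E_j$. This follows immediately from the coincidence result already recorded in the excerpt: taking $s=1$ in the criterion $s\leq\frac{mr}{mr+1-r}$, the condition becomes $1\leq\frac{mr}{mr+1-r}$, i.e.\ $mr+1-r\leq mr$, i.e.\ $r\geq1$. Hence for all $r\geq1$ we already have $\Pi_{\mathrm{as}(r;1)}(E_1,\dots,E_m;\mathbb{K})=\mathcal{L}(E_1,\dots,E_m;\mathbb{K})$, so the infimum in the statement is $\leq1$.

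The substance is the lower bound: I must exhibit, for each $r<1$, some infinite--dimensional Banach spaces $E_1,\dots,E_m$ and a continuous $m$--linear form that is \emph{not} absolutely $(r;1)$--summing, forcing the infimum to be $\geq1$. Following the hint that the idea is borrowed from \cite{dimant}, I would construct a single canonical $m$--linear form on a well--chosen space and test it against the monomial sequence $(e_j)_j$. The natural candidate is the form $\phi(x^{(1)},\dots,x^{(m)})=\sum_{j} x_j^{(1)}\cdots x_j^{(m)}$ acting on $\ell_p\times\cdots\times\ell_p$ (or on $c_0$), which is bounded precisely when $p\geq m$; feeding in $(e_j)_{j=1}^n$ gives left--hand side $\big(\sum_{j=1}^n 1\big)^{1/r}=n^{1/r}$ while the supremum factor $\sup_{\varphi\in B_{\ell_p^*}}\big(\sum_{j=1}^n|\varphi(e_j)|\big)=\|(e_j)\|_{w,1}$ stays controlled (of order $n^{1/p^{*}}$). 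Comparing growth rates as $n\to\infty$ shows that no finite constant $C$ can work once $r$ is small, giving non--coincidence. The key is to arrange the exponents so that for \emph{every} $r<1$ the test sequence defeats summability; by letting $p\downarrow m$ (equivalently $p^{*}\to\frac{m}{m-1}$) one can push the threshold for $r$ down, and a limiting/diagonal choice of $p$ depending on $r$ handles all $r<1$.

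Concretely, I would fix $r<1$, choose $p>m$ close enough to $m$ that the weak--$\ell_1$ norm of $(e_j)_{j=1}^n$ grows strictly slower than $n^{1/r}$ across the $m$ factors, and then the ratio of the two sides tends to infinity; this is exactly the mechanism already used in Theorem~\ref{xx} via the Kahane--Salem--Zygmund estimate, except here the simpler diagonal form $\phi$ suffices because we only need $s=1$ on the right. Alternatively, one can invoke \cite[Proposition 1]{zal} directly, as the excerpt does for the neighbouring boundary case: it already produces an $m$--linear form outside $\Pi_{\mathrm{as}(t;p^{*})}$ for the relevant range of $t$, and specializing/monotonicity in $s$ (using that decreasing $s$ to $1$ only enlarges the summing class, so non--coincidence at $s=1$ is the hardest and must be argued on its own) yields the required counterexample.

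\emph{The main obstacle} I anticipate is the lower bound's uniformity in $r$: a single fixed space will only rule out coincidence for $r$ above some threshold depending on that space, so the delicate point is to let the underlying space (the parameter $p$, hence $p^{*}$) vary with $r$ and verify that as $p\downarrow m$ the failure of $(r;1)$--summability persists for every $r<1$. Making this limiting argument clean—ensuring the constructed form remains bounded while the summing inequality blows up—is where the real work lies; the upper bound and the final assembly via the definition of the infimum are routine.
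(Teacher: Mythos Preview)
Your upper bound is fine and matches the paper's use of the Defant--Voigt theorem. The lower bound, however, contains a computational slip that manufactures a difficulty that is not there.

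In $\ell_p$ one has $\bigl\|(e_j)_{j=1}^n\bigr\|_{w,1}=\sup_{\|y\|_{p^*}\le 1}\sum_{j=1}^n|y_j|=n^{1/p}$, \emph{not} $n^{1/p^*}$. With the correct exponent, testing the diagonal form $\phi(x^{(1)},\dots,x^{(m)})=\sum_j x_j^{(1)}\cdots x_j^{(m)}$ on any fixed $\ell_p$ with $p\ge m$ gives left side $n^{1/r}$ and right side $C\,n^{m/p}\le C\,n$; since $1/r>1$ for every $r<1$, this already defeats $(r;1)$--summability for \emph{all} $r<1$ on a single space. No limiting choice of $p$ depending on $r$ is needed, and your ``main obstacle'' evaporates. (Your aside about $c_0$ does not work as stated: the infinite diagonal sum is unbounded on $c_0\times\cdots\times c_0$, so one cannot use the single form $\phi$ there.)

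The paper's argument is still simpler and worth noting: take $E_j=c_0$ and the \emph{truncated} forms
\[
T_n(x^{(1)},\dots,x^{(m)})=\sum_{j=1}^{n}x_j^{(1)}\cdots x_j^{(m)},
\]
so that $\|T_n\|=n$ while $\|(e_j)_{j=1}^n\|_{w,1}=1$ in $c_0$. Assuming coincidence for some $r$, the $(r;1)$--summing inequality (with the norm of the operator on the right, as usual) gives $n^{1/r}\le C\,\|T_n\|\cdot 1^m=Cn$, forcing $r\ge 1$. This avoids any computation with $\ell_p$ and any exponent bookkeeping.
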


\begin{proof}
The equality holds for $r=1$; this is the so called Defant--Voigt theorem.
It remains to prove that the equality does not hold for $r<1$. This is
simple; we just need to choose $E_{j}=c_{0}$ for all $j$ and suppose that%
\begin{equation}
\mathcal{L}(E_{1},...,E_{m};\mathbb{K})=\Pi _{\mathrm{as}\left( r;1\right)
}(E_{1},...,E_{m};\mathbb{K}).  \label{u7l}
\end{equation}%
For all positive integers $n$, consider the $m$-linear forms%
\begin{equation*}
T_{n}:c_{0}\times \cdots \times c_{0}\rightarrow \mathbb{K}
\end{equation*}%
\bigskip defined by%
\begin{equation*}
T_{n}(x^{(1)},...,x^{(m)})=\sum\limits_{j=1}^{n}x_{j}^{(1)}\cdots
x_{j}^{(m)}.
\end{equation*}%
Then it is plain that $\left\Vert T_{n}\right\Vert =n$, and from (\ref{u7l})
there is a $C\geq 1$ such that
\begin{equation*}
\left( \sum\limits_{j=1}^{n}\left\vert T_{n}(e_{j},...,e_{j})\right\vert
^{r}\right) ^{\frac{1}{r}} \leq C\left\Vert T_{n}\right\Vert
\prod\limits_{k=1}^{m}\sup_{\varphi \in B_{E_{k}^{\ast
}}}\sum\limits_{j=1}^{n}\left\vert \varphi (e_{j})\right\vert =Cn,
\end{equation*}
i.e., $n^{1/r}\leq Cn$ and thus $r\geq 1$.
\end{proof}

\bigskip

This simple proposition ensures that the zone defined by $r<1$ and $s=1$ in
the Figure \ref{figure3} is a non coincidence zone, i.e., the Defant--Voigt
theorem is optimal. Therefore, we can make a new table for the results of
coincidence and non coincidence in the \textquotedblleft boundaries" of
Figure \ref{figure3}:

\bigskip

\begin{center}
\begin{tabular}{|c|c|c|}
\hline
$\frac{1}{m} \leq r<1$ & $s=1$ & non coincidence \\ \hline
$r\geq\frac{1}{m}$ & $s=mr$ & non coincidence \\ \hline
$r\geq1$ & $s=1$ & coincidence \\ \hline
$r\geq1$ & $s=\frac{mr}{mr+1-r}$ & coincidence \\ \hline
\end{tabular}
\end{center}

\bigskip

\end{document}